\theoremstyle{definition} 
\newtheorem{thm}{Theorem}[section]
\newtheorem{lem}[thm]{Lemma}
\newtheorem{conj}[thm]{Conjecture}
\theoremstyle{definition}
\newtheorem{defn}[thm]{Definition}
\newtheorem{exmp}[thm]{Example}
\newtheorem{rmk}[thm]{Remark}
\theoremstyle{remark}
\newcommand{\Aut}{\text{Aut}}
\newcommand{\bZ}{\mathbb{Z}}
\newcommand{\bR}{\mathbb{R}}
\newcommand{\bP}{\mathbb{P}}
\newcommand{\bA}{\mathbb{A}}
\newcommand{\bF}{\mathbb{F}}
\newcommand{\bG}{\mathbb{G}}
\newcommand{\mf}{\mathfrak}
\newcommand{\mc}{\mathcal}
\newcommand{\bs}{\boldsymbol}
\newcommand{\ch}{\text{char}}
\newcommand{\Spec}{\text{Spec}}
\newcommand{\id}{\text{id}}
\newcommand{\ol}{\overline}
\newcommand{\lcm}{\text{lcm}}
\numberwithin{equation}{section}
\begin{document}
\title[\resizebox{6in}{!}{Weighted distribution of points on cyclic covers of the projective line over finite fields}]{Weighted distribution of points on cyclic covers of the projective line over finite fields}

\author{GilYoung Cheong}
\address{Department of Mathematics, University of Michigan, 530 Church Street, Ann Arbor, MI 48109-1043}

\email{gcheong@umich.edu}


\keywords{weighted degree, curves over finite fields, distribution of points, zeta function, sieve}

\begin{abstract}
Given a finite field $\bF_{q}$, we study the distribution of the number of $\bF_{q}$-points on (possibly singular) affine curves given by the polynomial equations of the form $C_{f} : y^{m} = f(x)$, where $f$ is randomly chosen from a fixed collection $\mc{F}(\bF_{q})$ of polynomials in $\bF_{q}[x]$ with fixed $m \geq 2$. Under some conditions, these equations are affine models of cyclic $m$-covers of the projective line. Previously, different authors obtained asymptotic results about distributions of points on curves associated to certain collections of polynomials $f$ defined by large degree of $f$ or large genus of the smooth, projective, and geometrically irreducible curves $\tilde{C}_{f}$ obtained from the affine equations $C_{f}$, when the degree or genus goes to infinity. We summarize their strategies as a lemma, which gives a sufficient condition on the number of polynomials in a fixed collection $\mc{F}(\bF_{q})$ with prescribed values, that automatically gives the distribution of points on the affine curves associated to the collection. We give infinitely many new examples of collections $\mc{F}(\bF_{q})$ which satisfy the sufficient condition and hence produce infinitely many new distributions when a certain invariant goes to infinity. The main object of this paper is to demonstrate how changing the invariant that one takes to infinity changes the resulting distribution of points on curves.
\end{abstract}

\maketitle

\section{Introduction}

\hspace{3mm} Throughout the entire paper, we \emph{fix} an arbitrary finite field $\bF_{q}$.

\

\subsection{Motivation and Goal}\label{intro} Given a collection $\mc{H}(\mathbb{F}_{q})$ of non-singular curves over $\mathbb{F}_{q}$, it is natural to ask how the numbers of $\bF_{q}$-points on the curves in $\mc{H}(\mathbb{F}_{q})$ are distributed. For example, consider the set $\mc{H}_{g}(\bF_{q})$ of $\bF_{q}$-points of the moduli space of genus $g$ double cover over $\bP^{1}$. It is known that

\[\displaystyle\lim_{g \rightarrow \infty} \dfrac{|\{[C] \in \mc{H}_{g}(\bF_{q}) : \#C(\bF_{q}) = k\}|'}{|\mc{H}_{g}(\bF_{q})|'} = \text{Prob}\left(\sum_{i=1}^{q+1}X_{i} = k\right),\]

\

where $X_{1}, \cdots, X_{q+1}$ are independent and identically distributed (i.i.d.) random variables such that

\begin{center}
$X_{i} = \left\{
	\begin{array}{ll}
	0 & \mbox{with probability} \frac{1}{2}\frac{1}{1 + q^{-1}} \\
	1 & \mbox{with probability} \frac{q^{-1}}{1 + q^{-1}} \\
	2 & \mbox{with probability} \frac{1}{2}\frac{1}{1 + q^{-1}}
	\end{array}\right.$.
\end{center}

\

The notation $|\mc{H}_{g}(\bF_{q})|'$ signifies each $[C] \in \mc{H}_{g}(\bF_{q})$ is counted with the weight $1/|\Aut(C)|$. Intuitively, the result says that a random double cover $C \rightarrow \bP^{1}_{\bF_{q}}$ has $0, 1,$ or $2$ points in the fiber at each point $x_{i} \in \bP^{1}(\bF_{q})$ with the probability given by the variable $X_{i}$.

\

\hspace{3mm} In this paper, we focus on the affine version of the above situation, whose reason will be explained in Section \ref{geo} by explaining the general strategy given by Lemma \ref{reg}. Any double cover $C \rightarrow \bP^{1}_{\bF_{q}}$ of genus $g$ can be obtained by the affine model

\[C^{(2)}_{f} : y^{2} = f(x)\]

\

where $f \in \bF_{q}[x]$ is a square-free polynomial of degree $2g + 1$ or $2g + 2$. Therefore, one can obtain the distribution of the number of $\bF_{q}$-points of $C$ by that of $C^{(2)}_{f}$, which was first given in \cite{KR09} (Theorem 1) using the same $X_{i}$ as above but for $1 \leq i \leq q$ which intuitively accounts for the size of each fiber at $x_{i} \in \bA^{1}(\bF_{q})$. Details are worked out in the proof of Theorem 1.1 in Section 6 of \cite{BDFL10}. An apparent generalization of the main result in \cite{KR09} is to consider the affine models of the form

\[C^{(m)}_{f} : y^{m} = f(x)\]

\

for general $m \geq 2$ and allow $f$ to have multiplicities in its roots in $\ol{\bF_{q}}$ (i.e., $f$ is an $n$-th power-free polynomial with some $n \geq 2$). Before considering generalizations, notice that any $n$-th power-free polynomial $f$ has a unique decomposition

\[f = a \cdot f_{1} f_{2}^{2} \cdots f_{n-1}^{n-1}\]
\

where $f_{i}$ are monic, pairwise coprime, and $a \in \bF_{q}^{\times}$. (Throughout this paper, when we write such a decomposition, we will implicitly assume the condition on $a$ and $f_{i}$ as above.)

\

\hspace{3mm} Analogous distributions when $f$ are $n$-th power-free polynomials are computed in \cite{BDFL10} and \cite{CWZ15}, but the former computed the distribution when $\min_{i}(\deg(f_{i})) \rightarrow \infty$ and the later did the same when $\deg(f) \rightarrow \infty$. For example, when $q \equiv 1 \mod 3$ and $m = n = 3$, Theorem 3.1 of \cite{CWZ15} gives the distribution with

\begin{center}
$X_{i} = \left\{
	\begin{array}{ll}
	0 & \mbox{with probability} \frac{2}{3}\frac{1}{1 + q^{-1} + q^{-2}} \\
	1 & \mbox{with probability} \frac{q^{-1} + q^{-2}}{1 + q^{-1} + q^{-2}} \\
	3 & \mbox{with probability} \frac{1}{3}\frac{1}{1 + q^{-1} + q^{-2}}
	\end{array}\right.$,
\end{center}

\

while Theorem 3.1 of \cite{BDFL10} is given with 

\begin{center}
$X_{i} = \left\{
	\begin{array}{ll}
	0 & \mbox{with probability} \frac{2}{3}\frac{1}{1 + 2q^{-1}} \\
	1 & \mbox{with probability} \frac{2q^{-1}}{1 + 2q^{-1}} \\
	3 & \mbox{with probability} \frac{1}{3}\frac{1}{1 + q^{-1} + q^{-2}}
	\end{array}\right.$.
\end{center}

\

\hspace{3mm} We would like to understand similarities between above generalizations of \cite{KR09}. Studying the proofs of the main results in \cite{KR09}, \cite{CWZ15}, and \cite{BDFL10}, one may notice a similarity in their strategies, which we summarize as Lemma \ref{reg}. Using this common strategy, we find infinitely many new collections of $4$-th power-free polynomials in $\bF_{q}[x]$ parametrized by integers $N \geq 2$ that give the distributions with 

\begin{center}
$X_{i} = \left\{
	\begin{array}{ll}
	0 & \mbox{with probability} \left(1 - \frac{1}{(m, q-1)}\right)\frac{1}{1 + q^{-1} + q^{-N} + q^{-(N+1)}} \\
	1 & \mbox{with probability} \frac{q^{-1} + q^{-N} + q^{-(N+1)}}{1 + q^{-1} + q^{-N} + q^{-(N+1)}} \\
	(m, q-1) & \mbox{with probability} \frac{1}{(m, q-1)}\frac{1}{1 + q^{-1} + q^{-N} + q^{-(N+1)}}
	\end{array}\right.$,
\end{center}

\

which is the content of Theorem \ref{C2}.

\

\subsection{Weighted degree and Main theorem}\label{sec1.2} Let $n \geq 2$. Given positive integers $c_{1}, \cdots, c_{n-1}$, define the \textbf{weighted degree} of an $n$-th power-free polynomial 

\[f = a f_{1}f_{2}^{2} \cdots f_{n-1}^{n-1} \in \mathbb{F}_{q}[x]\]

\

with respect to the \textbf{weight} $\bs{c} = (c_{1}, \cdots, c_{n-1})$ as

\vspace{1mm}

\begin{center}
$\deg(f, \bs{c}) := c_{1}\deg(f_{1}) + \cdots + c_{n-1}\deg(f_{n-1})$.
\end{center}

\vspace{1mm}

In particular, we have

\vspace{1mm}

\begin{center}
$\deg(f, (1, 2, \cdots, n-1)) = \deg(f)$.
\end{center}

\vspace{1mm}

Hence, the weighted degree generalizes the usual notion of degree of a polynomial.

\

\hspace{3mm} For $d \geq 0$, we denote

\vspace{1mm}

\begin{center}
$\hat{\mathcal{F}}_{d,n}^{\bs{c}}(\bF_{q}) := \{f \in \mathbb{F}_{q}[x] : f \text{ is } n\text{-th power-free with } d = \deg(f, \bs{c})\}$,
\end{center}

\vspace{1mm}

and

\vspace{1mm}

\begin{center}
$\mathcal{F}_{d,n}^{\bs{c}}(\bF_{q}) := \{f \in \hat{\mathcal{F}}_{d,n}^{\bs{c}}(\bF_{q}) : f \text{ is monic}\}$.
\end{center}

\vspace{1mm}

The only difference between the symbols $\hat{\mathcal{F}}_{d, n}^{\bs{c}}$ and $\mathcal{F}_{d, n}^{\bs{c}}$ is whether or not we allow non-monic polynomials. The following is our main theorem.
 
\

\begin{thm}\label{C2} Let $m, n \geq 2$. For any $N \geq 2$, we have

\begin{center}
$\displaystyle\lim_{d \rightarrow \infty} \dfrac{\#\{f \in \mathcal{F}_{d,4}^{(1, N, N+1)}(\bF_{q}) : \#C^{(m)}_{f}(\mathbb{F}_{q}) = k\}}{\#\mathcal{F}_{d,4}^{(1, N, N+1)}(\bF_{q})} = \text{Prob}\left(\sum_{i=1}^{q}X_{i} = k\right)$,
\end{center}

where the $X_{i}$ are i.i.d. random variables with

\begin{center}
$X_{i} = \left\{
	\begin{array}{ll}
	0 & \mbox{with probability} \left(1 - \frac{1}{(m, q-1)}\right)\frac{1}{1 + q^{-1} + q^{-N} + q^{-(N+1)}} \\
	1 & \mbox{with probability} \frac{q^{-1} + q^{-N} + q^{-(N+1)}}{1 + q^{-1} + q^{-N} + q^{-(N+1)}} \\
	(m, q-1) & \mbox{with probability} \frac{1}{(m, q-1)}\frac{1}{1 + q^{-1} + q^{-N} + q^{-(N+1)}}
	\end{array}\right.$
\end{center}

such that when $(m, q-1) = 1$, the sum two probabilities is $\text{Prob}(X_{i} = 1)$.

\

Moreover, if replace $\mathcal{F}_{d,4}^{(1, N, N+1)}$ with $\hat{\mathcal{F}}_{d,4}^{(1, N, N+1)}$, then we obtain the same distribution.
\end{thm}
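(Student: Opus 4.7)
The plan is to invoke Lemma \ref{reg}, which reduces the statement about $\#C^{(m)}_f(\bF_q)$ to counting estimates on the collections $\mathcal{F}_{d,4}^{(1,N,N+1)}(\bF_q)$ and $\hat{\mathcal{F}}_{d,4}^{(1,N,N+1)}(\bF_q)$. Labeling the rational points of $\bA^1_{\bF_q}$ as $x_1, \ldots, x_q$, one has
\[
\#C^{(m)}_f(\bF_q) = \sum_{i=1}^q \#\{y \in \bF_q : y^m = f(x_i)\},
\]
and the $i$-th summand equals $0$, $1$, or $(m, q-1)$ according as $f(x_i)$ lies in $\bF_q^\times \sm (\bF_q^\times)^m$, equals $0$, or lies in $(\bF_q^\times)^m$. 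So it suffices to show, for every assignment of one of these three local ``types'' at each $x_i$, that the proportion of $f \in \mathcal{F}_{d,4}^{(1,N,N+1)}(\bF_q)$ realizing the assignment converges as $d \to \infty$ to the product of the local probabilities asserted in the theorem.

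\medskip

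For the counting, the natural tool is the Euler product. Decomposing a monic $4$th-power-free $f$ as $f_1 f_2^2 f_3^3$ with the $f_j$ pairwise coprime, squarefree, and monic, the generating function
\[
Z(u) := \sum_{f} u^{\deg(f,(1,N,N+1))} = \prod_{P}\bigl(1 + u^{\deg P} + u^{N\deg P} + u^{(N+1)\deg P}\bigr),
\]
taken over monic primes $P$ of $\bF_q[x]$, factors since for each $P$ one independently picks which (at most one) of $f_1, f_2, f_3$ is divisible by $P$. Dividing $Z(u)$ by $\prod_P (1 + u^{\deg P}) = (1 - qu^2)/(1 - qu)$ leaves a residual product convergent for $|u| < q^{-1/N}$; since $N \geq 2$ this includes $u = q^{-1}$, so $Z(u)$ has a simple pole at $q^{-1}$ and $|\mathcal{F}_{d,4}^{(1,N,N+1)}(\bF_q)| = \Theta(q^d)$. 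To count $f$ with a prescribed divisibility pattern at the $q$ degree-one primes, one replaces the Euler factor $1 + u + u^N + u^{N+1}$ at each $P_i = x - x_i$ by whichever of the four monomials $1, u, u^N, u^{N+1}$ selects the desired branch, and reads residues at $u = q^{-1}$. This yields local probabilities proportional to $1 : q^{-1} : q^{-N} : q^{-(N+1)}$, summing to the denominator $1 + q^{-1} + q^{-N} + q^{-(N+1)}$ that appears throughout the theorem.

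\medskip

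It remains to split the ``non-vanishing'' event at $x_i$ into $f(x_i) \in (\bF_q^\times)^m$ versus $f(x_i) \notin (\bF_q^\times)^m$. For $\hat{\mathcal{F}}_{d,4}^{(1,N,N+1)}(\bF_q)$ this is trivial: multiplying by a uniformly random $a \in \bF_q^\times$ places $f(x_i)$ uniformly in $\bF_q^\times$, and hence in $(\bF_q^\times)^m$ with probability exactly $1/(m, q-1)$. The monic case requires a character-sum argument: for each nontrivial character $\chi$ of $\bF_q^\times/(\bF_q^\times)^m$, the twisted generating function
\[
Z_\chi(u) := \sum_{f \text{ monic}} \chi(f(x_i))\, u^{\deg(f,(1,N,N+1))} = \prod_{P \neq P_i}\bigl(1 + \chi(P(x_i))\,u^{\deg P} + \chi^2(P(x_i))\,u^{N\deg P} + \chi^3(P(x_i))\,u^{(N+1)\deg P}\bigr)
\]
factors into Dirichlet $L$-functions modulo $P_i$ times a factor holomorphic at $u = q^{-1}$; by the Riemann hypothesis for function fields the coefficients of $Z_\chi$ are $O(q^{d/2})$, which is negligible compared to the $\Theta(q^d)$ main term. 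Fourier inversion on $\bF_q^\times/(\bF_q^\times)^m$ then delivers the equidistribution, hence the $1/(m,q-1)$ versus $1 - 1/(m,q-1)$ split. The hard part will be this character-sum step, specifically verifying that inserting $\chi$ at the single degree-one prime $P_i$ does not resurrect a pole at $u = q^{-1}$; once that is in hand, the remainder is bookkeeping packaged by Lemma \ref{reg}.
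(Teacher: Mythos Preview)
Your plan is correct and would go through, but it takes a different route from the paper. Both arguments reduce to Lemma~\ref{reg}, so the only issue is verifying its hypothesis: that $\#\{f\in\mathcal{F}_{d,4}^{(1,N,N+1)}(\bF_q):f(x_i)=a_i\}$ is asymptotically independent of the choice of $a_1,\dots,a_r\in\bF_q^\times$. The paper does this by a purely elementary bijection (its Lemma~\ref{key}): writing $f=f_1f_2^2f_3^3$ and setting $\tilde f_1=f_1f_3$, $\tilde f_2=f_2f_3$ identifies $\mathcal{F}_{d,4}^{(1,N,N+1)}(\bF_q)$ with pairs of monic squarefree polynomials subject only to $\deg\tilde f_1+N\deg\tilde f_2=d$, with no coprimality constraint. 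One then fixes $\tilde f_2=g$ and counts squarefree $\tilde f_1$ with the prescribed values $a_i/g(x_i)^2$ directly via the known squarefree count (Lemma~\ref{nfree}); summing over $g$ gives the main term and an $O(q^{d/2})$ error. No characters, no $L$-functions.

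Your Euler-product factorization $1+u+u^N+u^{N+1}=(1+u)(1+u^N)$ is exactly the generating-function avatar of this bijection, so the combinatorial core is the same. Where you diverge is in handling the specific values $a_i$: you propose to Fourier-invert on $\bF_q^\times$ and bound the twisted sums $Z_\chi(u)$ via $L$-functions. This works, but two remarks are in order. First, for a single degree-one conductor $P_i$ the $L$-function of a nontrivial character is the constant $1$, so your appeal to the Riemann hypothesis is unnecessary there; it only becomes relevant once you twist at several points simultaneously (conductor $\prod_i P_i$), which you will need for the joint hypothesis of Lemma~\ref{reg} but did not spell out. Second, the paper's direct count avoids this entire layer: once the bijection removes the coprimality condition, the values $a_i$ enter only through Lemma~\ref{nfree}, whose main term is already independent of the $a_i$. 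Your approach is more analytic and perhaps more portable to other weight vectors, but for this particular family the paper's route is shorter and strictly more elementary.
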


\begin{proof} Taking $r = 0$ in Theorem \ref{C}, we have

\[|\mathcal{F}_{d,4}^{(1,N,N+1)}(\bF_{q})| = O(q^{d}) + O(q^{d/2}) = O(q^{d}),\]

\

so

\[\lim_{d \rightarrow \infty}\frac{O(q^{d/2})}{|\mathcal{F}_{d,4}^{(1,N,N+1)}(\bF_{q})|} = \lim_{d \rightarrow \infty} O(q^{-d/2}) = 0.\]

\

\

Take $s = 3$, $(d_{1}, d_{2}, d_{3}) = (d, N, N+1)$, and $\phi(d, N, N+1) = d$ in the hypothesis of Lemma \ref{reg}. Then Theorem \ref{C} implies the hypothesis for this case, so we have the conclusion of the lemma with

\[\psi(q,d,N,N+1) = (q - 1)(1 + q^{-1} + q^{-N} + q^{-(N+1)}).\]
\end{proof}

\

\subsection{Acknowledgments} The author would like to thank Michael Zieve for his guidance on every part of this paper, the anonymous referee for helpful comments and thoughtful suggestions for revision, and Melanie Matchett Wood for providing the wonderful undergraduate research opportunity from which this work arose. The author would also like to thank Bogdan Petrenko and Atanas Iliev for their encouragement to initiate this research as a Master's thesis \cite{Che15}. Finally, the author would like to thank Mark Greenfield, Trevor Hyde, Takumi Murayama, and Farrah Yhee for proofreading this paper.

\

\section{Geometric remarks}\label{geo}

\subsection{Asymptotic regularity condition} The common strategy of \cite{KR09}, \cite{CWZ15}, and \cite{BDFL10} in computing the distributions of $\bF_{q}$-points on $C_{f}^{(m)}$ where $f$ is randomly chosen from a collection $\mc{F}(\bF_{q})$ is to reduce the problem to proving certain regularity condition on the number of polynomial in $\mc{F}(\bF_{q})$ with prescribed values.

\

\begin{lem}\label{reg} Let $\phi : \bZ^{s} \rightarrow \bR$. For each $\bs{d} \in \bZ^{s}$, consider a finite subset $\mc{F}_{\bs{d}}(\mathbb{F}_{q}) \subset \bF_{q}[x]$. Suppose that for any distinct elements $x_{1}, \cdots, x_{r} \in \bF_{q}$ and not necessarily distinct $a_{1}, \cdots, a_{r} \in \bF_{q}^{\times}$, we have

\[|\{f \in \mathcal{F}_{\bs{d}}(\mathbb{F}_{q}) : f(x_{i}) = a_{i} \text{ for } 1 \leq i \leq r\}| = |\mathcal{F}_{\bs{d}}(\mathbb{F}_{q})| \left(\dfrac{1}{\psi(q, \bs{d})}\right)^{r} + E(q, \bs{d})\]

\

where $\psi(q, \bs{d})$ are some nonzero real numbers, and

\[\lim_{\phi(\bs{d}) \rightarrow \infty}\frac{E(q, \bs{d})}{|\mathcal{F}_{\bs{d}}(\mathbb{F}_{q})|} = 0.\]

\

Then

\[\lim_{\phi(\bs{d}) \rightarrow \infty} \dfrac{|\{f \in \mathcal{F}_{\bs{d}}(\bF_{q}) : \#C_{f}^{(m)}(\bF_{q}) = k\}|}{|\mathcal{F}_{\bs{d}}(\bF_{q})|} = \text{Prob}\left(\sum_{i=1}^{q}X_{i} = k\right),\]

\vspace{1mm}

where the $X_{i}$ are i.i.d. random variables with

\begin{center}
$X_{i} = \left\{
	\begin{array}{ll}
	0 & \mbox{with probability} \left(1 - \frac{1}{(m, q-1)}\right)\frac{q-1}{\psi(q, \bs{d})} \\
	1 & \mbox{with probability} 1 -\frac{q-1}{\psi(q, \bs{d})} \\
	(m, q-1) & \mbox{with probability} \left(\frac{1}{(m, q-1)}\right)\frac{q-1}{\psi(q, \bs{d})}
	\end{array}\right.$,
\end{center}

\

such that when $(m, q-1) = 1$, the sum of the last two probabilities is $\text{Prob}(X_{i} = 1)$.
\end{lem}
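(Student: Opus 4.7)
The plan is to decompose $\#C_f^{(m)}(\bF_q) = \sum_{i=1}^q Y(f, x_i)$, where $\bF_q = \{x_1, \ldots, x_q\}$ and $Y(f, x) \in \{0, 1, (m, q-1)\}$ counts the number of $y \in \bF_q$ with $y^m = f(x)$; explicitly, $Y(f, x) = 1$ if $f(x) = 0$, $Y(f, x) = (m, q-1)$ if $f(x) \in (\bF_q^\times)^m$, and $Y(f, x) = 0$ if $f(x) \in \bF_q^\times \sm (\bF_q^\times)^m$. The point count thus depends on $f$ only through the tuple $(f(x_1), \ldots, f(x_q))$, so I would reduce the problem to controlling the joint distribution of these values under the hypothesis.

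For each pattern $\bs{\epsilon} = (\epsilon_1, \ldots, \epsilon_q) \in \{0, 1, (m, q-1)\}^q$, I would compute
\[
P_{\bs{d}}(\bs{\epsilon}) := \frac{|\{f \in \mc{F}_{\bs{d}}(\bF_q) : Y(f, x_i) = \epsilon_i \text{ for all } i\}|}{|\mc{F}_{\bs{d}}(\bF_q)|}
\]
and aim to show $P_{\bs{d}}(\bs{\epsilon}) = \prod_{i=1}^q P(X_i = \epsilon_i) + o(1)$ as $\phi(\bs{d}) \to \infty$. Summing over the finitely many $\bs{\epsilon}$ with $\epsilon_1 + \cdots + \epsilon_q = k$ would then identify the LHS of the conclusion with $\Prob\bigl(\sum_i X_i = k\bigr)$, since the pattern set has size $3^q$ independent of $\bs{d}$ and the $X_i$ are i.i.d.

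The individual computation splits $\mathbf{1}_{Y(f, x_i) = \epsilon_i}$ by cases: for $\epsilon_i \ne 1$, it equals $\sum_{a \in T_{\epsilon_i}} \mathbf{1}_{f(x_i) = a}$ with $T_0 := \bF_q^\times \sm (\bF_q^\times)^m$ and $T_{(m, q-1)} := (\bF_q^\times)^m$; for $\epsilon_i = 1$, it equals $1 - \sum_{a \in \bF_q^\times} \mathbf{1}_{f(x_i) = a}$. Multiplying the indicators over $i$, summing over $f \in \mc{F}_{\bs{d}}(\bF_q)$, and applying the hypothesis to each of the resulting subcounts (all of which prescribe nonzero values at distinct points of $\bF_q$) expresses $P_{\bs{d}}(\bs{\epsilon})$ as a main term plus an error bounded by $C(q) \, E(q, \bs{d}) / |\mc{F}_{\bs{d}}(\bF_q)|$, which vanishes in the limit. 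The main term collapses to $\prod_i P(X_i = \epsilon_i)$ via the identities $|T_0| = \bigl(1 - \frac{1}{(m, q-1)}\bigr)(q-1)$ and $|T_{(m, q-1)}| = \frac{q-1}{(m, q-1)}$ together with the inclusion-exclusion identity $\sum_{S \subseteq Z}(-1)^{|S|}\bigl(\frac{q-1}{\psi}\bigr)^{|S|} = \bigl(1 - \frac{q-1}{\psi}\bigr)^{|Z|}$ for $Z := \{i : \epsilon_i = 1\}$.

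The main obstacle is bookkeeping the inclusion-exclusion on the zero coordinates, but the key structural observation is that the number of conditioning points is at most $q$, which is fixed throughout the limit; hence the number of error terms is uniformly bounded in $\bs{d}$ while each individual error $E(q, \bs{d}) / |\mc{F}_{\bs{d}}(\bF_q)|$ tends to zero by hypothesis. The degenerate case $(m, q-1) = 1$, where $T_0 = \emptyset$ and $T_{(m, q-1)} = \bF_q^\times$, simply collapses the two nonzero branches of $X_i$ into one, matching the parenthetical in the conclusion.
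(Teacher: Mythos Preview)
Your proposal is correct and follows essentially the same approach as the paper's proof: both decompose by the pattern of whether each $f(x_i)$ is zero, an $m$-th power, or a non-$m$-th-power nonzero element, use inclusion--exclusion to rewrite the zero conditions in terms of nonzero prescribed values so that the hypothesis applies, collapse the resulting sum via the binomial identity $\sum_{S\subseteq Z}(-1)^{|S|}\bigl(\tfrac{q-1}{\psi}\bigr)^{|S|}=\bigl(1-\tfrac{q-1}{\psi}\bigr)^{|Z|}$, and observe that the number of error terms is bounded by a function of $q$ alone. The only difference is packaging: you frame things with indicator functions and the random variables $Y(f,x_i)$, whereas the paper writes out the partition of $\bF_q$ into three labeled pieces explicitly, but the computations are line-for-line equivalent.
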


\

\begin{rmk} Lemma \ref{reg} has an immediate practical advantage. If $\mathcal{F}_{(d_{1}, \cdots, d_{s})}$ is defined by a condition (e.g., square-free) that is invariant under the multiplication by any element of $\bF_{q}^{\times}$, the subset of monic polynomials of $\mathcal{F}_{(d_{1}, \cdots, d_{s})}$ with the same condition in place of $\mathcal{F}_{(d_{1}, \cdots, d_{s})}$ automatically satisfies the conclusion of the lemma. Clearly, the converse also holds.

\

\hspace{3mm} We prove Lemma \ref{reg} in Section \ref{regproof}. Notice it follows that any such $\psi(q, d_{1}, \cdots, d_{s}) \in \bR^{\times}$ in the statement must satisfy

\[\psi(q, d_{1}, \cdots, d_{s}) \geq q-1\]

\

so long as $(m, q-1) \neq 1$. Denote $\bs{d} = (d_{1}, \cdots, d_{s})$. In all of the examples we will consider, we can interpret $\mathcal{F}_{\bs{d}}(\mathbb{F}_{q})$ as the set of $\bF_{q}$-points of a scheme $\mathcal{F}_{\bs{d}}$, which will be given as an explicit affine open subset of an affine space. In this setting, one may view the set

\[\{f \in \mathcal{F}_{\bs{d}}(\mathbb{F}_{q}) : f(x_{i}) = a_{i} \text{ for } 1 \leq i \leq r\}\]

\

as the set of $\bF_{q}$-points on closed subschemes $(\mathcal{F}_{\bs{d}})_{\bF_{q}}(\bs{a})$ of $(\mathcal{F}_{\bs{d}})_{\bF_{q}}$ parametrized by 

\[\bs{a} = (a_{1}, \cdots, a_{r}) \in \bF_{q}^{\times} \times \cdots \times \bF_{q}^{\times} = \bG_{m}(\bF_{q}) \times \cdots \times \bG_{m}(\bF_{q}).\]

\

Therefore, proving the hypothesis of Lemma \ref{reg} of such a collection $\mathcal{F}_{\bs{d}}(\bF_{q})$ can be thought of establishing some asymptotic regularity of the number of $\bF_{q}$-points of members of the collection $\{(\mathcal{F}_{\bs{d}})_{\bF_{q}}(\bs{a}) : \bs{a}\}$. In other words, once we show that the number of $\bF_{q}$-points on $(\mathcal{F}_{\bs{d}})_{\bF_{q}}(\bs{a})$ does not change much when we vary $\bs{a}$, we get a distribution of $\bF_{q}$-points on $C_{f}^{(m)}$ where $f$ is randomly chosen in $f \in \mc{F}_{\bs{d}}(\bF_{q})$. Moreover, if we can estimate the average of the number of $\bF_{q}$-points on $(\mathcal{F}_{\bs{d}})_{\bF_{q}}(\bs{a})$, we get an explicit distribution of $\bF_{q}$-points on $C_{f}^{(m)}$.
\end{rmk}

\

\hspace{3mm} The following are the motivating examples for Lemma \ref{reg}.

\

\begin{exmp}\label{KR} \cite{KR09} proves the hypothesis of Lemma \ref{reg} when $s = 1$ (which let us write $d = d_{1}$), $\phi = \id$, and $\mc{F}_{d}(\bF_{q})$ is the set of degree $d$ monic square-free polynomials. In this case, we have

\[\psi(q, d) = (q-1)(1 + q^{-1})\]

\

and

\[E(q, d) = O(q^{d/2}),\]

\

where the error term treats $q$ as a constant.

\

\hspace{3mm} By interpreting the set of monic degree $d$ polynomials in $\bF_{q}[x]$ as the $\bF_{q}$-points of $\bA^{d}$, we may interpret $\mc{F}_{d}$ the non-vanishing locus of the degree $d$ discriminant.
\end{exmp}

\

\begin{exmp}\label{BDFL} \cite{BDFL10} proves the hypothesis of Lemma \ref{reg} when $s = l-1$ where $l$ is any prime such that $q \equiv 1 \mod l$, $\phi = \min$, and $\mc{F}_{(d_{1}, \cdots, d_{l-1})}(\bF_{q})$ is the set of monic $l$-th power-free polynomials $f = a f_{1}f_{2}^{2} \cdots f_{l-1}^{l-1}$ such that $\deg(f_{i}) = d_{i}$ for $1 \leq i \leq l-1$.

\[\psi(q, d_{1}, \cdots, d_{l-1}) = (q-1)(1 + (l-1)q^{-1})\]

\

and

\[E(q, d_{1}, \cdots, d_{l-1}) = O(q^{\epsilon(d_{2}+\cdots+d_{l-1})}(q^{-d_{2}} + \cdots + q^{-d_{l-1}}) + q^{-d_{1}/2}),\]

\

for any fixed $\epsilon > 0$, where the error term treats $q$ as a constant.

\

\hspace{3mm} The set of degree $d$ monic square-free polynomials in $\bF_{q}[x]$ can be thought of the set of $\bF_{q}$-points on the non-vanishing locus $D_{\bA^{d}}(\Delta_{d})$ of degree $d$ monic discriminant $\Delta_{d} = \Delta_{d}(t_{1}, \cdots, t_{d})$ whose variables are taken from the coefficients of the general polynomial

\[x^{d} + t_{1}x^{d-1} + \cdots + t_{d-1}x + t_{d}.\]

\

Therefore, we may interpret the set $\mc{F}_{(d_{1}, \cdots, d_{l-1})}$ as the intersection of the non-vanishing locus of the ${l-1 \choose 2}$ resultant polynomials (because $f_{i}$ are pairwise coprime) and

\[D_{\bA^{d_{1}}}(\Delta_{d_{1}}) \times \cdots \times D_{\bA^{d_{l-1}}}(\Delta_{d_{l-1}}).\]

\

\hspace{3mm} It is also worth to note that the hypothesis of Lemma \ref{reg} is satisfied without either condition that requires $l$ is a prime or $q \equiv 1 \mod l$. (See Proposition 7.1 of \cite{BDFL10}.) These conditions are needed in order for the polynomials in $\mc{F}_{(d_{1}, \cdots, d_{l-1})}(\bF_{q})$ to ensure that the curves $C_{f}^{(l)}$ are affine models of smooth, projective, and geometrically irreducible curves over the projective line whose function fields over $\bF_{q}(x)$ has the cyclic Galois group $\bZ/l\bZ$.
\end{exmp}

\

\begin{exmp}\label{CWZ} \cite{CWZ15} proves the hypothesis of Lemma \ref{reg} when $s = 1$ (which let us write $d = d_{1}$), $\phi = \id$, and $\mc{F}_{d}(\bF_{q})$ is the set of monic degree $d$ $n$-th power-free polynomials for fixed $n \geq 2$. In this case, we have

\[\psi(q, d) = (q-1)(1 + q^{-1} + q^{-2} + \cdots + q^{-(n-1)})\]

\

and

\[E(q, d) = O(q^{(n-1)d/n}),\]

\

where the error term treats $q$ as a constant.

\

\hspace{3mm} We may interpret 

\[\mc{F}_{d} = \bigsqcup_{d_{1} + 2d_{2} + \cdots + (n-1)d_{n-1} = d}\mc{F}_{(d_{1}, \cdots, d_{n-1})}\]

\

where $\mc{F}_{(d_{1}, \cdots, d_{n-1})}$ is defined in Example \ref{BDFL} (by removing the conditions that $l$ is a prime and $q \equiv 1 \mod l$).
\end{exmp}

\

\hspace{3mm} As addressed in the proof, Theorem \ref{C2} reduces to the following thanks to Lemma \ref{reg}.

\

\begin{thm}\label{C} Let $N \geq 2$ and $d \geq N$. Given distinct $x_{1}, \cdots, x_{r} \in \mathbb{F}_{q}$ and any $a_{1}, \cdots, a_{r} \in \mathbb{F}_{q}^{\times}$, we have

\begin{align*}
& \#\{f \in \mathcal{F}_{d,4}^{(1,N,N+1)}(\bF_{q}) : f(x_{1}) = a_{1}, \cdots, f(x_{r}) = a_{r}\} \\
&= q^{d}(1 - q^{-1})\left(q^{-1} + q^{-N} + \dfrac{1-q^{-1}}{1 - q^{1-N}}\right) \left(\dfrac{q^{-1}}{1 + q^{-1} + q^{-N} + q^{-(N+1)}}\right)^{r}  + O(q^{d/2}).
\end{align*}
\end{thm}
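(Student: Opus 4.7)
The plan is to detect the point-value conditions $f(x_i) = a_i$ via the orthogonality of multiplicative characters of $\bF_q^\times$, rewrite the count as an average over Dirichlet characters $\psi$ modulo $M := \prod_i (x - x_i)$ of $u^d$-coefficients of an Euler product in $u$, and then appeal to the Weil Riemann Hypothesis for $L$-functions over $\bF_q[x]$ to bound the contributions of non-trivial $\psi$ by $O(q^{d/2})$; the trivial character produces the explicit main term via a simple residue at $u = q^{-1}$.

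Concretely, using the orthogonality relation $\mathbf{1}_{y = a} = \frac{1}{q-1}\sum_\chi \chi(y)\overline{\chi}(a)$ for $y, a \in \bF_q^\times$ (with the convention $\chi(0) = 0$), the count equals
\[
\frac{1}{(q-1)^r}\sum_{\chi_1, \ldots, \chi_r} \prod_{i=1}^r \overline{\chi_i}(a_i)\, [u^d]\, S(\bs{\chi}, u),
\]
where $S(\bs{\chi}, u) := \sum_f \prod_i \chi_i(f(x_i))\, u^{\deg(f, (1, N, N+1))}$ is summed over monic $4$-th power-free $f$. Decomposing $f = f_1 f_2^2 f_3^3$ with $f_j$ pairwise coprime monic square-free and packaging $(\chi_1, \ldots, \chi_r)$ into the Dirichlet character $\psi(h) := \prod_i \chi_i(h(x_i))$ modulo $M$, the series factors as an Euler product over monic irreducible $P \in \bF_q[x]$; using the identity $\psi(P)\psi^2(P) = \psi^3(P)$, each local factor collapses as $(1 + \psi(P) u^{\deg P})(1 + \psi^2(P) u^{N \deg P})$, which in terms of Dirichlet $L$-functions on $\bF_q[x]$ gives
\[
S(\bs{\chi}, u) = \frac{L(u, \psi)\, L(u^N, \psi^2)}{L(u^2, \psi^2)\, L(u^{2N}, \psi^4)}.
\]

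For the trivial character $\psi = \psi_0$, the elementary formula $L(u, \psi_0) = (1 - u)^r / (1 - qu)$ (and its analogues under $u \mapsto u^2, u^N, u^{2N}$) specializes this to
\[
S(\psi_0, u) = \frac{(1 - qu^2)(1 - qu^{2N})}{(1 + u)^r (1 + u^N)^r (1 - qu)(1 - qu^N)},
\]
whose unique singularity inside $|u| < q^{-1/N}$ is a simple pole at $u = q^{-1}$. Evaluating the residue and simplifying via the identity $q^{-1} + q^{-N} + \frac{1 - q^{-1}}{1 - q^{1-N}} = \frac{1 - q^{1-2N}}{1 - q^{1-N}}$ recovers the claimed main term after the outer factor $1/(q-1)^r$ and the trivial sign $\prod_i \overline{\chi_0}(a_i) = 1$.

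For non-trivial $\psi$, every non-trivial $L(v, \psi^k)$ appearing in $S(\psi, u)$ is a polynomial whose zeros satisfy $|v| = q^{-1/2}$ by the Weil Riemann Hypothesis for function-field $L$-functions. A case analysis on which of $\psi^2$ and $\psi^4$ are trivial shows that the possible singularities of $S(\psi, u)$ sit at radii among $\{q^{-1/N}, q^{-1/4}, q^{-1/(4N)}, 1\}$, all of which are $\geq q^{-1/2}$ when $N \geq 2$. Hence $[u^d] S(\psi, u) = O(q^{d/2})$ uniformly in $\psi \neq \psi_0$, and since the outer sum runs over the $(q-1)^r$ characters modulo $M$ with $r$ and $q$ fixed, the aggregated non-trivial contribution is also $O(q^{d/2})$. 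The main obstacle is this case analysis, keeping careful track of where each $L$-factor contributes a pole or a zero (in particular, that zeros of $L(u^2, \psi^2)$ and $L(u^{2N}, \psi^4)$ induce poles of $S(\psi, u)$ only outside $|u| < q^{-1/2}$), so that no singularity of $S(\psi, u)$ slips strictly inside the critical disk for any non-trivial $\psi$; once this is confirmed, the Weil bound absorbs all the "error" characters into $O(q^{d/2})$ and the trivial character alone supplies the explicit main term.
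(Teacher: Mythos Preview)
Your approach is correct but genuinely different from the paper's.

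The paper's proof is entirely elementary. It first invokes Lemma~\ref{key} to rewrite $\mathcal{F}_{d,4}^{(1,N,N+1)}$ as the set of products $F=fg^{2}$ with $f,g$ monic square-free and $\deg f+N\deg g=d$. For each fixed $g$ with $g(x_{i})\neq 0$, it then applies the sieve estimate of Lemma~\ref{nfree} (the Kurlberg--Rudnick/CWZ count of square-free polynomials with prescribed values, proved by M\"obius inversion) to count admissible $f$, and finally sums over $g$ using the explicit evaluation of $\sum_{g}q^{-N\deg g}$ in Lemma~\ref{zeta}. No $L$-functions or Weil bounds appear; the $O(q^{d/2})$ comes directly from the error in Lemma~\ref{nfree}.

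Your argument replaces the sieve by harmonic analysis: you detect the constraints with multiplicative characters, package them into a Dirichlet character $\psi$ modulo $\prod_{i}(x-x_{i})$, and use the Riemann Hypothesis for $L(u,\psi)$ to control the nonprincipal contributions. The two proofs share exactly one idea, namely the Euler-product identity
\[
1+\psi(P)u^{\deg P}+\psi^{2}(P)u^{N\deg P}+\psi^{3}(P)u^{(N+1)\deg P}=(1+\psi(P)u^{\deg P})(1+\psi^{2}(P)u^{N\deg P}),
\]
which is the analytic incarnation of Lemma~\ref{key}. After that they diverge completely. Your route is more conceptual and yields the main term in one residue computation, but it invokes the Weil bound where the paper needs nothing deeper than inclusion--exclusion; conversely, the paper's method makes the error term transparent as a concrete tail sum. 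One small point to tidy in your write-up: for the principal character you identify the pole at $u=q^{-1}$ but do not explicitly bound the remainder; note that for $N=2$ the factor $(1-qu^{2})$ in the numerator cancels the would-be pole at $|u|=q^{-1/2}$ coming from $(1-qu^{N})$, so in every case the secondary singularities of $S(\psi_{0},u)$ lie at $|u|\geq q^{-1/N}\geq q^{-1/2}$ and the stated $O(q^{d/2})$ holds.
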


\

\begin{rmk} When $N = 2$ in Theorem \ref{C}, it coincides to the result in Example \ref{CWZ} when $n = 4$. Notice that the error term given in the example is $O(q^{3d/4})$ whereas the one given in Theorem \ref{C} is $O(q^{d/2})$, which is an improvement.
\end{rmk}

\

\subsection{Conjectural weighted distribution} We state the main conjecture, which generalize Theorem \ref{C2} and the main result of \cite{CWZ15}.

\

\begin{conj}\label{monic} Let $m, n \geq 2$. Let $\bs{c} = (c_{1}, \cdots, c_{n-1}) \in (\bZ_{>0})^{n-1}$. Then there is a uniform constant $0 < \epsilon < 1$ such that given any distinct $x_{1}, \cdots, x_{r} \in \mathbb{F}_{q}$ and not necessarily distinct $a_{1}, \cdots, a_{r} \in \mathbb{F}_{q}^{\times}$, we have

\begin{center}
$\{f \in \mathcal{F}_{d,n}^{\bs{c}}(\mathbb{F}_{q}) : f(x_{i}) = a_{i} \text{ for } 1 \leq i \leq l\} = |\mathcal{F}_{d,n}^{\bs{c}}(\mathbb{F}_{q})| \left(\dfrac{1}{(q - 1)(1 + q^{-c_{1}} + \cdots + q^{-c_{n-1}})}\right)^{r} + O(q^{\epsilon d})$
\end{center}

and

\[|\mc{F}_{d,n}^{\bs{c}}(\bF_{q})| = O(q^{d}).\]

\

In particular, applying Lemma \ref{reg}, we have

\vspace{1mm}

\begin{center}
$\displaystyle\lim_{d \rightarrow \infty} \dfrac{|\{f \in \mathcal{F}_{d,n}^{\bs{c}}(\bF_{q}) : \#C^{(m)}_{f}(\mathbb{F}_{q}) = k\}|}{|\mathcal{F}_{d,n}^{\bs{c}}(\bF_{q})|} = \text{Prob}\left(\sum_{i=1}^{q}X_{i} = k\right)$,
\end{center}

\vspace{1mm}

where the $X_{i}$ are i.i.d. random variables with

\begin{center}
$X_{i} = \left\{
	\begin{array}{ll}
	0 & \mbox{with probability} \left(1 - \frac{1}{(m, q-1)}\right)\frac{1}{1 + q^{-c_{1}} + \cdots + q^{-c_{n-1}}} \\
	1 & \mbox{with probability} \frac{q^{-c_{1}} + \cdots + q^{-c_{n-1}}}{1 + q^{-c_{1}} + \cdots + q^{-c_{n-1}}} \\
	(m, q-1) & \mbox{with probability} \frac{1}{(m, q-1)}\frac{1}{1 + q^{-c_{1}} + \cdots + q^{-c_{n-1}}}
	\end{array}\right.$,
\end{center}

\

such that when $(m, q-1) = 1$, the sum two probabilities is $\text{Prob}(X_{i} = 1)$.

\

Moreover, if replace $\mathcal{F}_{d,n}^{\bs{c}}$ with $\hat{\mathcal{F}}_{d,n}^{\bs{c}}$, then we obtain the same distribution.
\end{conj}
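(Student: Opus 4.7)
The plan is to extend the strategy of Theorem \ref{C} (and of \cite{KR09}, \cite{BDFL10}, \cite{CWZ15}) by combining an Euler-product analysis of the generating series of $\mathcal{F}_{d,n}^{\bs{c}}(\bF_{q})$ with a Dirichlet-character twist, and then invoking Weil's Riemann Hypothesis for function-field $L$-functions to bound the non-principal contributions. The observation that makes the whole approach go is the unique decomposition $f = f_{1} f_{2}^{2} \cdots f_{n-1}^{n-1}$ with monic, square-free, pairwise coprime $f_{i}$, which yields the Euler product
\[
\sum_{d \geq 0} \#\mathcal{F}_{d,n}^{\bs{c}}(\bF_{q})\, T^{d} \;=\; \prod_{P} \Bigl(1 + T^{c_{1} \deg P} + \cdots + T^{c_{n-1} \deg P}\Bigr),
\]
the product being over monic irreducibles $P \in \bF_{q}[x]$.

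First I would analyze this Euler product to establish $\#\mathcal{F}_{d,n}^{\bs{c}}(\bF_{q}) = O(q^{d})$ with an explicit leading constant. Assuming (as is natural for the conjecture to have content) that some $c_{j} = 1$, comparison with $Z(T) = 1/(1 - qT)$ shows that $T = 1/q$ is the dominant singularity and gives $\#\mathcal{F}_{d,n}^{\bs{c}}(\bF_{q}) = C_{0} q^{d} + O(q^{\epsilon_{0} d})$ with $\epsilon_{0} < 1$ controlled by the next-nearest singularities of the Euler product; the non-monic count then follows by multiplication by $q - 1$.

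Next, to handle the constrained count, set $m(x) = \prod_{i=1}^{r} (x - x_{i})$ and note that $f(x_{i}) = a_{i} \in \bF_{q}^{\times}$ forces $\gcd(f, m) = 1$. Orthogonality on $(\bF_{q}[x]/m)^{\times} \cong (\bF_{q}^{\times})^{r}$ then yields
\[
\#\{f \in \mathcal{F}_{d,n}^{\bs{c}}(\bF_{q}) : f(x_{i}) = a_{i} \text{ for all } i\} \;=\; \frac{1}{(q-1)^{r}}\sum_{\chi} \overline{\chi(\bs{a})}\, S_{\chi}(d),
\]
where the sum is over characters $\chi = (\chi_{1}, \ldots, \chi_{r})$ of $(\bF_{q}^{\times})^{r}$ and $S_{\chi}(d)$ is the coefficient of $T^{d}$ in the twisted Euler product
\[
\prod_{P \nmid m} \Bigl(1 + \chi(P) T^{c_{1} \deg P} + \chi(P)^{2} T^{c_{2} \deg P} + \cdots + \chi(P)^{n-1} T^{c_{n-1} \deg P}\Bigr).
\]
The principal character produces the predicted main term; for each non-principal $\chi$ I would express this twisted Euler product as a ratio of function-field $L$-functions of the form $L(T^{b}, \chi^{a})$, and then use Weil's theorem (zeros of primitive $L(T, \chi)$ lie on $|T| = q^{-1/2}$) to conclude $|S_{\chi}(d)| = O(q^{\epsilon_{1} d})$ with $\epsilon_{1} < 1$. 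Summing over $\chi$ yields the stated asymptotic with error $O(q^{\epsilon d})$.

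The main obstacle will be the factorization step for arbitrary $\bs{c}$. In the specific case $(1, N, N+1)$ of Theorem \ref{C}, the identity
\[
1 + T^{e} + T^{Ne} + T^{(N+1)e} \;=\; (1 + T^{e})(1 + T^{Ne})
\]
reduces the twisted Euler product to a product of two square-free-type $L$-function quotients, so Weil applies verbatim to each factor; similarly, \cite{CWZ15} exploits $\sum_{j=0}^{n-1} T^{je} = (1 - T^{ne})/(1 - T^{e})$ for the weight $(1, 2, \ldots, n-1)$. For a general weight vector, however, the polynomial $1 + z^{c_{1}} + \cdots + z^{c_{n-1}} \in \bZ[z]$ need not admit any comparable factorization over $\bQ$, so one must work over $\ol{\bQ}$ with each reciprocal root individually and track possible interference between the resulting terms. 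Producing a uniform exponent $\epsilon < 1$ in this generality, and making sure that the various error contributions never conspire to overtake the main term, is the principal technical difficulty; it may well require a more conceptual cohomological input rather than a purely elementary Weil-bound argument.
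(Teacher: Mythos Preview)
The statement you are addressing is labeled a \emph{Conjecture} in the paper; the paper does not prove it. What the paper proves is the special case $\bs{c}=(1,N,N+1)$ (Theorem~\ref{C}), by an elementary argument: the combinatorial identity of Lemma~\ref{key} reduces the weighted count to a sum over pairs of square-free polynomials, and then Lemma~\ref{nfree} and the explicit Euler-product evaluation of Lemma~\ref{zeta} finish the job with no appeal to $L$-functions or Weil bounds. The ``Further directions'' section makes explicit that this method collapses for general $\bs{c}$ (already for $c_{1}=\cdots=c_{n-1}=1$, as one sees by setting $N=1$ in Lemma~\ref{zeta}), and that the only currently viable route to other cases is the heavier machinery of \cite{BDFL10} and \cite{BDFKLOW16}.

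Your proposal is a sensible research outline and you correctly isolate the factorization of the local Euler factor as the crux. But there is a concrete gap in the step where you pass to $\ol{\bQ}$. The local factor is a polynomial in the \emph{two} quantities $w=\chi(P)$ and $z=T^{\deg P}$, namely $1+\sum_{j=1}^{n-1} w^{j} z^{c_{j}}$. Expressing the Euler product as a ratio of $L(T^{b},\chi^{a})$'s requires this two-variable polynomial to factor as a product or ratio of terms of the form $(1-w^{a}z^{b})$; your $(1,N,N+1)$ and $(1,2,\ldots,n-1)$ examples are precisely the cases where such a factorization exists. For general $\bs{c}$ it does not, and factoring the one-variable specialization $p(z)=1+\sum_{j} z^{c_{j}}$ over $\ol{\bQ}$ does not help: the resulting factors $(1-\beta_{k} z)$, when re-inserted as $(1-\beta_{k} T^{\deg P})$, are not Euler factors of any Dirichlet $L$-function (the constant $\beta_{k}$ is not of the form $\chi(P)^{a}$), so Weil's theorem does not apply. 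Thus your reduction to Weil bounds is unjustified in the general case, and the proposal remains a heuristic rather than a proof --- which is consistent with the paper's own stance that the general statement is open.
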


\

\subsection{Genus vs. Weighted degree} One may consider Conjecture \ref{monic} geometrically for the special case when $(m, q) = 1$ so that each $y^{m} - f(x) \in K[x, y]$ is an irreducible polynomial where $K = \ol{\bF_{q}}$. Then $C^{(m)}_{f} : y^{m} = f(x)$ defines a geometrically irreducible curve over $\bA^{1}_{\bF_{q}}$, and we may projectivize and then normalize to get an $m$-fold cover $C = \tilde{C}^{(m)}_{f} \rightarrow \bP_{K}$. Up to an isomorphism, the way we projectivize will not matter because any two birationally equivalent normal irreducible curves are isomorphic to each other (as covers of $\bP^{1}_{K}$). Assume $q \equiv 1 \mod m$ so that the equation $X^{m} = 1$ has precisely $m$ solutions in $\bF_{q} \subset K$. Then the function field $K(C) = K(x)[y]/(y^{m} - f(x))$ is a Galois extension of $K(x)$ whose Galois group is $\bZ/m\bZ$.

\

\hspace{3mm} We keep working over a fixed algebraic closure $K = \ol{\bF_{q}}$. Notice that $C$ can be only ramified at $x_{i} \in K = \bA^{1}(K)$ such that $f(x_{i}) = 0$ or at infinity. Writing $f = a f_{1}f_{2}^{2} \cdots f_{n-1}^{n-1}$ as before, say $f_{j}(x_{i}) = 0$. Then considering the fiber product diagram

\begin{center}
\begin{tikzpicture}
\node (A) at (4,0) {$\Spec(K[x,y]/(y^{m} - f(x)))$};
\node (B) at (4,-2) {$\Spec(K[x])$};
\node (A') at (8,0) {$\Spec(K[y])$};
\node (B') at (8,-2) {$\Spec(K[t])$};

\path[->] (A) edge node[above] {} node[below] {} (A');
\path[->] (B) edge node[above] {} node[below] {} (B');

\path[->] (A) edge node[above] {} node[below] {} (B);
\path[->] (A') edge node[above] {} node[below] {} (B');
\end{tikzpicture},
\end{center}

\

we get the following diagram of the function field

\begin{center}
\begin{tikzpicture}
\node (A) at (4,0) {$K(C)$};
\node (B) at (4,-2) {$K(x)$};
\node (A') at (8,0) {$K(y)$};
\node (B') at (8,-2) {$K(t)$};

\path[<-] (A) edge node[above] {} node[below] {} (A');
\path[<-] (B) edge node[above] {} node[below] {} (B');

\path[<-] (A) edge node[above] {} node[below] {} (B);
\path[<-] (A') edge node[above] {} node[below] {} (B');
\end{tikzpicture}.
\end{center}

Denote $\mf{p}_{1} = (x - x_{i})$, $\mf{p}_{2} = (y)$, and $\mf{p} = (t)$, which are primes of $K(x), K(y),$ and $K(t)$ respectively. Their ramfication indices are $e(\mf{p}_{1}|\mf{p}) = j$ and $e(\mf{p}_{2}|\mf{p}) = m$. Consider any prime $\mf{p}'$ of $K(C)$ lying over $(x - x_{i}, y)$, which is necessarily a maximal ideal, since the integeral closure of $K[x]$ is a Dedekind domain. By Nullstellensatz, we see $\mf{p}'$ corresponds to a preimage of $x_{i}$ under $C \rightarrow \bP^{1}_{K}$, so the degree of residue field extension $\kappa(\mf{p}')/\kappa(\mf{p})$ is $1$, which we will use shortly. Since $\ch(K) = p \nmid m = e(\mf{p}_{2}|\mf{p})$ (because $q \equiv 1 \mod m$), the prime $\mf{p}_{2}$ is \emph{tamely} ramified over $\mf{p}$. By Abhyankar's Lemma (for example, Theorem 3.9.1 of \cite{Stic09}), we have

\[e(\mf{p}'|\mf{p}) = \lcm(e(\mf{p}_{1}|\mf{p}), e(\mf{p}_{2}|\mf{p})) = \lcm(j, m).\]

\

Thus, we have

\[e(\mf{p}'|\mf{p}_{1}) = \frac{e(\mf{p}'|\mf{p})}{e(\mf{p}_{1}|\mf{p})} = \frac{\lcm(j, m)}{j} = \frac{m}{\gcd(j, m)}.\]

\

Since $K(C)/K(x)$ is Galois and $[\kappa(\mf{p}'):\kappa(\mf{p})] = 1$, denoting $r$ the number of preimages of $x_{i}$ under $C \rightarrow \bP^{1}_{K}$, we have

\[m = re(\mf{p}'|\mf{p}) = rm/\gcd(j,m),\]

\

so that

\[r = \gcd(j,m).\]

\

Taking the closure of $C_{f}^{(m)}$ in the weighted projective space 

\[\bP\left(\frac{\lcm(m,d)}{m}, \frac{\lcm(m,d)}{d}, \frac{\lcm(m,d)}{d}\right)\]

\

with the coordinate $[x : y : z]$ is same as gluing the two affine curves $C_{f}^{(m)} : y^{m} = f(x)$ and $y^{m} = z^{\lcm(\deg(f),m)}$ over $\bP^{1}_{\bF_{q}}$ with the coordinate $[x : z]$ given by the gluing along the isomorphism

\[\frac{K[x, y][1/x]}{(y^{m} - f(x))} \simeq \frac{K[y, z][1/z]}{(y^{m} - z^{\lcm(\deg(f),m)/m}f(1/z))}\]

\

where $y \mapsto y/z^{\lcm(\deg(f),m)/m}$ and $x \mapsto 1/z$, one may see that the ramification index at infinity is $m/\gcd(\deg(f),m)$. Again, arguing similarly to above, there are precisely $\gcd(\deg(f),m)$ preimages of $\infty = [1:0]$ under $C \rightarrow \bP^{1}_{K}$.

\

\hspace{3mm} By Riemann-Hurwitz, the genus $g_{C}$ of $C$ satisfies:

\begin{align*}
2 - 2g_{C} &= 2m - \gcd(m,\deg(f))\left(\frac{m}{\gcd(m,\deg(f)} - 1\right) - \sum_{j=1}^{n-1}\deg(f_{j})\gcd(j,m)\left(\frac{m}{\gcd(j,m)} - 1\right) \\
&= 2m - (m - \gcd(m,\deg(f))) - \sum_{j=1}^{n-1}\deg(f_{j})(m - \gcd(j,m)) \\
&= m + \gcd(m,\deg(f)) - \sum_{j=1}^{n-1}\deg(f_{j})(m - \gcd(j,m)),
\end{align*}

\

so

\[2g_{C} - 2 + m = - \gcd(m, \deg(f)) + \sum_{j=1}^{n-1}\deg(f_{j})(m - \gcd(j,m)).\]

\

Therefore, allowing an error of a constant, the genus $g_{C}$ is linearly related to the weighted degree $\deg(f, \bs{c})$ with the weight

\[\bs{c} = (c_{1}, \cdots, c_{n-1}) = (m - \gcd(1,m), m - \gcd(2,m), \cdots, m - \gcd(n-1,m)).\]

\

As in Section \ref{intro}, one may expect a converging distribution result for $\bF_{q}$-points on curves with genus $g$ as $g \rightarrow \infty$. An example can be found in \cite{BDFKLOW16}, where the authors computed the distribution of $\bF_{q}$-points on cyclic $l$-covers of $\bP^{1}_{\bF_{q}}$ with $l$ a prime such that $q \equiv 1 \mod l$. The covers are randomly selected from the set of $\bF_{q}$-points on their moduli space with fixed genus $g$, and then the distribution is obtained as $g \rightarrow \infty$. The random variable $X_{i}$ that gives the distribution coincides with $X_{i}$ given in Conjecture \ref{monic}

\

\begin{rmk} As the referee pointed out, Corollary \ref{C2} tells us that the distribution of the number of $\bF_{q}$-points reflects weighting on the number of branch points for the ramifications of the normalizations of the projective closures of $C_{f}^{(m)}$. This remark is extremely illuminating, but it is not yet clear what the correct statements are supposed to be in the general geometric setting.
\end{rmk}

\

\section{Proof of Lemma \ref{reg}}\label{regproof}

\begin{proof} We fix the following notations throughout the proof:

\begin{itemize}
	\item $\sigma = (m, q-1)$ and $C_{f} = C^{(m)}_{f}$;
	\item $\bs{d} = (d_{1}, \cdots, d_{s})$;
	\item $k_{0}, k_{1}, k_{\sigma} \in \bZ_{\geq 0}$ such that $k_{0} + k_{1} + k_{\sigma} = q$;
	\item $\mathbb{F}_{q} = \{x_{0,1}, \cdots, x_{0,k_{0}}, x_{1,1}, \cdots, x_{1,k_{1}}, x_{\sigma,1}, \cdots, x_{\sigma,k_{\sigma}}\}$;
	\item $a_{0,1}, \cdots, a_{0,k_{0}}, a_{\sigma,1}, \cdots, a_{\sigma,k_{\sigma}} \in \mathbb{F}_{q}^{\times}$ (not necessarily distinct);
	\item $C_{f}^{(m)}(x_{i}) = \{(x_{i}, y) \in \bA^{2}(\bF_{q}) : y^{m} = f(x_{i})\}$.
\end{itemize}

\

By the inclusion-exclusion, we have

\begin{align*}
& \#\left\{
\begin{array}{c}
f \in \mathcal{F}_{\bs{d}}(\mathbb{F}_{q}) : \\
f(x_{0,1}) = a_{0,1}, \cdots, f(x_{0,k_{0}}) = a_{0,k_{0}} \\
f(x_{1,1}) = \cdots = f(x_{1,k_{1}}) = 0 \\
f(x_{\sigma,1}) = a_{\sigma,1}, \cdots, f(x_{\sigma,k_{\sigma}}) = a_{\sigma,k_{\sigma}}
\end{array}\right\} \\
& = \displaystyle\sum_{\substack{0 \leq r \leq k_{1} \\ 1 \leq i_{1} < \cdots < i_{r} \leq k_{1}}}(-1)^{r}\#\left\{
\begin{array}{c}
f \in \mathcal{F}_{\bs{d}}(\mathbb{F}_{q}) : \\
f(x_{0,1}) = a_{0,1}, \cdots, f(x_{0,k_{0}}) = a_{0,k_{0}} \\
f(x_{1,i_{1}}) \neq 0, \cdots, f(x_{1,i_{r}}) \neq 0 \\
f(x_{\sigma,1}) = a_{\sigma,1}, \cdots, f(x_{\sigma,k_{\sigma}}) = a_{\sigma,k_{\sigma}}
\end{array}\right\}
\end{align*}

\

\hspace{3mm} Applying the hypothesis, the above is

\[= \displaystyle\sum_{\substack{0 \leq r \leq k_{1} \\ 1 \leq i_{1} < \cdots < i_{r} \leq k_{1}}}(-1)^{r}(q-1)^{r}\left(|\mathcal{F}_{\bs{d}}(\mathbb{F}_{q})| \left(\dfrac{1}{\psi(q, \bs{d})}\right)^{k_{0} + k_{\sigma} + r} + E(q, \bs{d})\right),\]

\

where

\[\lim_{\phi(\bs{d}) \rightarrow \infty} \frac{E(q, \bs{d})}{|\mc{F}_{\bs{d}}(\bF_{q})|} = 0.\]

\

\hspace{3mm} Not worrying about the term $E(q, \bs{d})$, we compute

\begin{align*}
& \sum_{\substack{0 \leq r \leq k_{1} \\ 1 \leq i_{1} < \cdots < i_{r} \leq k_{1}}}(-1)^{r}(q-1)^{r}|\mc{F}_{\bs{d}}(\bF_{q})| \left(\dfrac{1}{\psi(q, \bs{d})}\right)^{k_{0} + k_{\sigma} + r} \\
& = |\mc{F}_{\bs{d}}(\bF_{q})| \left(\dfrac{1}{\psi(q, \bs{d})}\right)^{k_{0} + k_{\sigma}} \displaystyle\sum_{\substack{0 \leq r \leq k_{1} \\ 1 \leq i_{1} < \cdots < i_{r} \leq k_{1}}}(-1)^{r}(q-1)^{r}\left(\dfrac{1}{\psi(q, \bs{d})}\right)^{r} \\
& = |\mc{F}_{\bs{d}}(\bF_{q})| \left(\dfrac{1}{\psi(q, \bs{d})}\right)^{k_{0} + k_{\sigma}} \displaystyle\sum_{\substack{0 \leq r \leq k_{1} \\ 1 \leq i_{1} < \cdots < i_{r} \leq k_{1}}}\left(\dfrac{1 - q}{\psi(q, \bs{d})}\right)^{r}
\end{align*}

\begin{align*}
& = |\mc{F}_{\bs{d}}(\bF_{q})| \left(\dfrac{1}{\psi(q, \bs{d})}\right)^{k_{0} + k_{\sigma}} \displaystyle\sum_{0 \leq r \leq k_{1}}{k_{1} \choose r}\left(\dfrac{1-q}{\psi(q, \bs{d})}\right)^{r} \\
& = |\mc{F}_{\bs{d}}(\bF_{q})| \left(\dfrac{1}{\psi(q, \bs{d})}\right)^{k_{0} + k_{\sigma}} \left(1 - \dfrac{q-1}{\psi(q, \bs{d})}\right)^{k_{1}}.
\end{align*}

\

\hspace{3mm} Denote $\mathbb{F}_{q}^{\times, m}$ the set of $m$-th powers in $\mathbb{F}_{q}^{\times}$. In the following computation, we denote

\begin{center}
$a_{0} := (a_{0,1}, \cdots, a_{0,k_{0}})$ and $a_{\sigma} := (a_{\sigma,1}, \cdots, a_{\sigma,k_{\sigma}})$.
\end{center}

Since $|\mathbb{F}_{q}^{\times, m}| = (q-1)/\sigma$, we have

\begin{align*}
& \#\left\{
\begin{array}{c}
f \in \mc{F}_{\bs{d}}(\bF_{q}) : \\
\#C_{f}(x_{0,1}) = \cdots = \#C_{f}(x_{0,k_{0}}) = 0 \\
\#C_{f}(x_{1,1}) = \cdots = \#C_{f}(x_{1,k_{1}}) = 1 \\
\#C_{f}(x_{\sigma,1}) = \cdots = \#C_{f}(x_{\sigma,k_{\sigma}}) = \sigma
\end{array}\right\} \\
& = \displaystyle\sum_{ \substack{ a_{0} \in (\mathbb{F}_{q}^{\times} \setminus \mathbb{F}_{q}^{\times, m})^{k_{0}}\\ a_{\sigma} \in (\mathbb{F}_{q}^{\times, m})^{k_{\sigma}} } }\#\left\{
\begin{array}{c}
C_{f} \in \mc{F}_{\bs{d}}(\bF_{q}) : \\
f(x_{0,1}) = a_{0,1}, \cdots, f(x_{0,k_{0}}) = a_{0,k_{0}} \\
f(x_{1,1}) = \cdots = f(x_{1,k_{1}}) = 0 \\
f(x_{\sigma,1}) = a_{\sigma,1}, \cdots, f(x_{\sigma,k_{\sigma}}) = a_{\sigma,k_{\sigma}}
\end{array}\right\} \\
& = |\mc{F}_{\bs{d}}(\bF_{q})| \left(\left(1 - \dfrac{1}{\sigma}\right)\dfrac{1}{\psi(q, \bs{d})}\right)^{k_{0}} \left(1 - \dfrac{q-1}{\psi(q, \bs{d})}\right)^{k_{1}}\left(\dfrac{1}{\sigma}\dfrac{1}{\psi(q, \bs{d})}\right)^{k_{\sigma}} + E(q, \bs{d}),
\end{align*}

\

so dividing by $|\mc{F}_{\bs{d}}(\bF_{q})|$ yields the result.
\end{proof}

\

\section{Proof of Theorem \ref{C}}


\subsection{Lemmas and Notations} We summarize necessary lemmas and notations. Some of them were introduced in \cite{CWZ15}, generalizing tools given in \cite{KR09} which is the case $n = 2$, and this is the only case we use here although we state more general results.

\

\hspace{3mm} Given $d \geq 0$ and $n \geq 2$, we denote

\begin{itemize}
	\item $\mathbb{F}_{q}[x]_{d} := \{f \in \mathbb{F}_{q}[x] : f \text{ monic of degree } d\}$;
	\item $\mathbb{F}_{q}[x]_{d}^{n} := \{f \in \mathbb{F}_{q}[x] : f \text{ monic $n$-th power-free of degree } d\}$;
		\item $\mathbb{F}_{q}[x]_{\infty} := \{f \in \mathbb{F}_{q}[x] : f \text{ monic}\}$;
	\item $\mathbb{F}_{q}[x]_{\infty}^{n} := \{f \in \mathbb{F}_{q}[x] : f \text{ monic $n$-th power-free}\}$.
\end{itemize}

\

\begin{defn} We define the (arithmetic) \textbf{zeta function} of $\mathbb{A}^{1}_{\mathbb{F}_{q}} = \text{Spec}(\mathbb{F}_{q}[x])$ as follows:

\begin{center}
$\zeta(s) := \displaystyle\sum_{\substack{F \in \mathbb{F}_{q}[x] \\ \text{monic}}}q^{-s\deg(F)} = \prod_{\substack{P \in \mathbb{F}_{q}[x] \\ \text{monic irreducible}}} (1 - q^{-s\deg(P)})^{-1} = \dfrac{1}{1 - q^{1-s}}$,
\end{center}

where $s \in \mathbb{C}$ with $\text{Re}(s) > 1$.
\end{defn}

\

\begin{lem}[Lemma 3.2 of \cite{CWZ15}]\label{nfree} Fix $d \geq 0$ and $n \geq 2$. Consider any distinct $x_{1}, \cdots, x_{r} \in \mathbb{F}_{q}$ where $0 \leq r \leq q$ and any not necessarily distinct $a_{1}, \cdots, a_{r} \in \mathbb{F}_{q}^{\times}$. Then

\begin{center}
$\#\left\{
  \begin{array}{c}
  f \in \bF_{q}[x]^{n}_{d} : \\
	f(x_{1}) = a_{1}, \cdots, f(x_{r}) = a_{r}
  \end{array}
\right \} = \dfrac{q^{d-r}(1 - q^{1-n})}{(1 - q^{-n})^{r}} + O(q^{d/n})$.
\end{center}
\end{lem}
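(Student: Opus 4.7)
The plan is to detect the condition ``$f$ is $n$-th power-free'' by Möbius inversion, reduce the count to one of monic polynomials of prescribed degree taking prescribed values at the $x_i$, and then recognize the main term as a truncation of an Euler product that collapses to $1/\zeta(n)$ divided by local factors at the $x_i$.

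First I would write
\[
\mathbf{1}_{f\text{ is }n\text{-power-free}} \;=\; \sum_{\substack{g\in \mathbb{F}_q[x]_\infty \\ g^n \mid f}} \mu(g),
\]
which follows from $\sum_{g\mid m}\mu(g)=[m=1]$ applied to $m=\prod P_i^{\lfloor e_i/n\rfloor}$ where $f=\prod P_i^{e_i}$. Swapping summations gives
\[
\#\{f\in\mathbb{F}_q[x]_d^n:f(x_i)=a_i\} \;=\; \sum_{g} \mu(g)\, N(g),
\]
where $N(g)=\#\{f\in\mathbb{F}_q[x]_d:g^n\mid f,\ f(x_i)=a_i\ \forall i\}$. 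For $\deg g \leq d/n$ I would parametrize $f=g^n h$ with $h$ monic of degree $e=d-n\deg g$; the condition $f(x_i)=a_i\neq 0$ forces $g(x_i)\neq 0$ and becomes $h(x_i)=a_i g(x_i)^{-n}$. A Lagrange interpolation argument then shows that the number of monic $h$ of degree $e$ with prescribed values at $r$ distinct points is exactly $q^{e-r}$ when $e\geq r$ (write $h=L+\prod_i(x-x_i)\,q$ with $L$ the interpolator of degree $<r$) and at most $1$ when $e<r$. Thus $N(g)=q^{d-n\deg g-r}$ for $\deg g \leq (d-r)/n$ with $g(x_i)\neq 0$ for all $i$, and $|N(g)|\leq 1$ otherwise.

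For the main term I would complete the truncated sum using the Euler product
\[
\sum_{\substack{g\in \mathbb{F}_q[x]_\infty \\ g(x_i)\neq 0\ \forall i}} \mu(g)\, q^{-n\deg g} \;=\; \prod_{P\neq x-x_i}(1-q^{-n\deg P}) \;=\; \frac{1/\zeta(n)}{(1-q^{-n})^r}\;=\;\frac{1-q^{1-n}}{(1-q^{-n})^r},
\]
and multiply by $q^{d-r}$ to recover the stated leading term. Two error contributions must be controlled: (i) the tail $\deg g > (d-r)/n$ in the completed Euler product, bounded by $q^{d-r}\sum_{e>(d-r)/n}q^{(1-n)e}=O(q^{(d-r)/n})=O(q^{d/n})$ by the geometric series; and (ii) the boundary range $(d-r)/n<\deg g\leq d/n$, where $|N(g)|\leq 1$ and the number of such $g$ is at most $\sum_{e\leq d/n}q^e=O(q^{d/n})$. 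The main obstacle is really just this bookkeeping at the boundary where the clean formula $q^{e-r}$ fails; handling it by the trivial bound $N(g)\leq 1$ and checking that the tail of the Euler product is of the same order $q^{d/n}$ is exactly what pins down the stated error.
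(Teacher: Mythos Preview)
The paper does not supply its own proof of this lemma; it is quoted verbatim as Lemma~3.2 of \cite{CWZ15} and used as a black box. Your argument is correct and is the standard M\"obius-sieve proof (and is, in outline, exactly what \cite{CWZ15} does): detect $n$-th power-freeness via $\sum_{g^n\mid f}\mu(g)$, parametrize $f=g^n h$, count monic $h$ of degree $d-n\deg g$ with prescribed values by interpolation, complete the resulting truncated sum to the Euler product $\prod_{P\neq x-x_i}(1-q^{-n\deg P})=(1-q^{1-n})/(1-q^{-n})^r$, and bound the tail and boundary by $O(q^{d/n})$.

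Two cosmetic remarks. First, you reuse the symbol $q$ for the quotient polynomial in $h=L+\prod_i(x-x_i)\,q$; pick another letter to avoid clashing with the field size. Second, in your boundary estimate you may as well note that the range $(d-r)/n<\deg g\le d/n$ contains at most $r/n+1=O(1)$ integer degrees, so the bound $\sum_{e\le d/n}q^e$ is generous but harmless.
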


\

\begin{lem}[Lemma 3.4 of \cite{CWZ15}] \label{count} Given $d \geq 0$ and $n \geq 2$, we have

\begin{center}
$\#\mathbb{F}_{q}[x]_{d}^{n} = \left\{
	\begin{array}{ll}
	q^{d} & \mbox{ for } 0 \leq d \leq n-1 \\
	q^{d} - q^{d-(n-1)} & \mbox{ for } d \geq n 
	\end{array}\right.$.
\end{center}

\end{lem}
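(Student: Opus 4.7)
The plan is to exploit the unique factorization
\[
f = g\cdot h^{n}, \qquad g\in \mathbb{F}_{q}[x]_{\infty}^{n},\ h\in \mathbb{F}_{q}[x]_{\infty},
\]
which every monic $f \in \mathbb{F}_{q}[x]$ admits: for each monic prime $P$ dividing $f$ with exponent $e = nq_{P}+r_{P}$ (with $0\le r_{P}<n$), put $P^{r_{P}}$ into $g$ and $P^{q_{P}}$ into $h$. This is visibly a bijection between monic polynomials of degree $d$ and pairs $(g,h)$ with $\deg(g)+n\deg(h)=d$, $g$ monic $n$-th power-free, and $h$ monic.

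Writing $a_{d}:=\#\mathbb{F}_{q}[x]_{d}^{n}$, and recalling that there are $q^{j}$ monic polynomials of degree $j$, the bijection gives the recursion
\[
q^{d} \;=\; \sum_{j=0}^{\lfloor d/n\rfloor} a_{d-nj}\,q^{j}.
\]
I would then read off the formula directly. For $0\le d\le n-1$ only $j=0$ contributes, giving $a_{d}=q^{d}$. For $d\ge n$, splitting off $j=0$ and re-indexing the remaining sum as $\sum_{j'=0}^{\lfloor (d-n)/n\rfloor}a_{(d-n)-nj'}q^{j'+1}=q\cdot q^{d-n}$ (using the same identity applied to $d-n$) yields
\[
q^{d} \;=\; a_{d} + q^{d-(n-1)},
\]
so $a_{d}=q^{d}-q^{d-(n-1)}$.

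Equivalently, one may package the bijection as a zeta-function identity $\zeta(s)=Z^{n}(s)\,\zeta(ns)$, where $Z^{n}(s):=\sum_{g}q^{-s\deg(g)}$ over monic $n$-th power-free $g$. Setting $u=q^{-s}$ this reads
\[
Z^{n}(s) \;=\; \frac{1-qu^{n}}{1-qu} \;=\; \sum_{d\ge 0} q^{d}u^{d} \;-\; \sum_{d\ge n} q^{d-n+1}u^{d},
\]
and comparing coefficients of $u^{d}$ recovers the same formula. This variant has the advantage of making the shape of the error term in Lemma~\ref{nfree} transparent and of connecting directly with the generating-function framework already used in the paper.

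No step here is a real obstacle; the only thing that needs to be checked carefully is the uniqueness of the decomposition $f=gh^{n}$ (which is a formal consequence of unique factorization in $\mathbb{F}_{q}[x]$) and the re-indexing in the recursion. The whole argument is entirely elementary, and proceeds in parallel with the standard derivation of the count of square-free polynomials that corresponds to $n=2$.
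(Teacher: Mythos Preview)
Your argument is correct: the decomposition $f=gh^{n}$ is indeed a bijection by unique factorization, the recursion $q^{d}=\sum_{j}a_{d-nj}q^{j}$ follows, and the re-indexing step is valid because $\lfloor d/n\rfloor-1=\lfloor(d-n)/n\rfloor$ for $d\ge n$. The generating-function variant is equally clean and, as you note, dovetails with the zeta-function language already in the paper.

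There is nothing to compare against here: the paper does not prove this lemma but simply imports it as Lemma~3.4 of \cite{CWZ15}. For what it is worth, the argument in \cite{CWZ15} is essentially your second variant (the Euler-product/zeta identity $\zeta(s)=Z^{n}(s)\zeta(ns)$ followed by reading off coefficients), so your proposal is not only correct but also aligned with the cited source.
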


\

\hspace{3mm} The most important observation in this proof is the following.

\begin{lem} \label{key} For $N \geq 2$ and $d \geq 0$, we have
\begin{align*}
\mathcal{F}_{d,4}^{(1,N,N+1)}(\bF_{q}) & = \left\{
\begin{array}{c}
f = f_{1}f_{2}^{2}f_{3}^{3} : f_{i} \in \mathbb{F}_{q}[x]_{d}^{2} \text{ pair-wise coprime and } \\
d = \deg(f_{1}) + N\deg(f_{2}) + (N+1)\deg(f_{3}) \\
\end{array}\right\} \\
& = \{f = \tilde{f}_{1}\tilde{f}_{2}^{2} : \tilde{f}_{i} \in \mathbb{F}_{q}[x]_{d}^{2} \text{ and } d = \deg(\tilde{f}_{1}) + N\deg(\tilde{f}_{2})\}.
\end{align*}
\end{lem}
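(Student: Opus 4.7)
The plan is to treat this as a two-step bookkeeping exercise in the UFD $\bF_{q}[x]$; I do not expect any real obstacle beyond carefully tracking which irreducible factor of $f$ lands in which piece of each decomposition.

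The first equality is essentially a rephrasing of the definition. By unique factorization, every monic $4$-th power-free polynomial $f$ admits a unique decomposition $f = f_{1} f_{2}^{2} f_{3}^{3}$ with $f_{1}, f_{2}, f_{3}$ monic, pairwise coprime, and (automatically) square-free, by letting $f_{i}$ be the product of the monic irreducibles appearing with multiplicity exactly $i$ in $f$. The weighted degree identity $\deg(f, (1, N, N+1)) = \deg(f_{1}) + N\deg(f_{2}) + (N+1)\deg(f_{3})$ then holds by the definition of weighted degree, identifying the first set in the statement with $\mathcal{F}_{d,4}^{(1,N,N+1)}(\bF_{q})$.

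For the second equality, I would exhibit an explicit bijection. In the forward direction, given $(f_{1}, f_{2}, f_{3})$ monic, pairwise coprime, square-free, with $\deg(f_{1}) + N\deg(f_{2}) + (N+1)\deg(f_{3}) = d$, set $\tilde{f}_{1} := f_{1} f_{3}$ and $\tilde{f}_{2} := f_{2} f_{3}$. Each $\tilde{f}_{j}$ is then monic square-free, direct expansion gives $\tilde{f}_{1} \tilde{f}_{2}^{2} = f_{1} f_{2}^{2} f_{3}^{3} = f$, and $\deg(\tilde{f}_{1}) + N\deg(\tilde{f}_{2}) = \deg(f_{1}) + N\deg(f_{2}) + (N+1)\deg(f_{3}) = d$. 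For the inverse direction, given monic square-free $(\tilde{f}_{1}, \tilde{f}_{2})$ with $\deg(\tilde{f}_{1}) + N\deg(\tilde{f}_{2}) = d$, define $f_{3} := \gcd(\tilde{f}_{1}, \tilde{f}_{2})$, $f_{1} := \tilde{f}_{1}/f_{3}$, and $f_{2} := \tilde{f}_{2}/f_{3}$; square-freeness of the $\tilde{f}_{j}$ forces $\gcd(f_{j}, f_{3}) = 1$ for $j = 1, 2$ (else $f_{3}^{2} \mid \tilde{f}_{j}$), while the choice $f_{3} = \gcd(\tilde{f}_{1}, \tilde{f}_{2})$ forces $\gcd(f_{1}, f_{2}) = 1$, and each $f_{i}$ remains square-free as a divisor of a square-free polynomial.

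Finally, these two constructions are mutually inverse: starting from $(f_{1}, f_{2}, f_{3})$ one has $\gcd(f_{1} f_{3}, f_{2} f_{3}) = f_{3} \gcd(f_{1}, f_{2}) = f_{3}$, recovering the triple; starting from $(\tilde{f}_{1}, \tilde{f}_{2})$, one has $\tilde{f}_{1} = f_{1} f_{3}$ and $\tilde{f}_{2} = f_{2} f_{3}$ by construction. The one conceptual point worth highlighting — and the cleanest way to see why $f_{3} = \gcd(\tilde{f}_{1}, \tilde{f}_{2})$ is the correct choice — is that under square-freeness every irreducible $p \mid f$ satisfies $v_{p}(f) = v_{p}(\tilde{f}_{1}) + 2 v_{p}(\tilde{f}_{2}) \in \{1, 2, 3\}$, and the three values correspond exactly to $p \mid f_{1}$, $p \mid f_{2}$, $p \mid f_{3}$, respectively.
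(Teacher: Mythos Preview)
Your proof is correct and follows exactly the same approach as the paper: the paper's entire proof is the single sentence ``Take $\tilde{f}_{1} = f_{1}f_{3}$ and $\tilde{f}_{2} = f_{2}f_{3}$,'' and you have simply supplied the routine verifications (square-freeness, the gcd inverse, and the degree identity) that the paper leaves implicit.
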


\begin{proof} Take $\tilde{f_{1}} = f_{1}f_{3}$ and $\tilde{f_{2}} = f_{2}f_{3}$.
\end{proof}

\

\hspace{3mm} We need the following computation for the proof.

\begin{lem}\label{zeta} Given any distinct $x_{1}, \cdots, x_{r} \in \mathbb{F}_{q}$, we have

\begin{align*}
\displaystyle\sum_{\substack{g \in \mathbb{F}_{q}[x]_{\infty}^{2} \\ g(x_{1}), \cdots, g(x_{r}) \neq 0}}q^{-t\deg(g)} & = \left(\dfrac{1}{1 + q^{-t}}\right)^{r}\left(q^{-1} + q^{-t} + \sum_{d=0}^{\infty}(q^{d} - q^{d-1})q^{-td}\right) \\
& = \left(\dfrac{1}{1 + q^{-t}}\right)^{r}\left(q^{-1} + q^{-t} + (q - 1)q^{-1}\sum_{d=0}^{\infty}q^{(1-t)d}\right) \\
& = \left(\dfrac{1}{1 + q^{-t}}\right)^{r}\left(q^{-1} + q^{-t} + \dfrac{1-q^{-1}}{1 - q^{1-t}}\right).
\end{align*}
\end{lem}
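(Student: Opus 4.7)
The plan is to exploit the Euler product structure of the generating function over square-free polynomials. First I would observe that for a monic $g$, the condition $g(x_i) \neq 0$ is equivalent to $(x - x_i) \nmid g$, and the $r$ linear polynomials $x - x_1, \ldots, x - x_r$ are distinct since the $x_j$ are. Because $g$ is square-free, each monic irreducible $P$ divides $g$ with multiplicity $0$ or $1$, so the sum factors as
\[\sum_{\substack{g \in \bF_q[x]^2_\infty \\ g(x_j) \neq 0 \text{ for all } j}} q^{-t\deg(g)} \;=\; \prod_{\substack{P \text{ monic irred.}\\ P \neq x - x_j \text{ for all } j}} (1 + q^{-t\deg P}) \;=\; \left(\frac{1}{1+q^{-t}}\right)^{r} \sum_{g \in \bF_q[x]^2_\infty} q^{-t\deg g}.\]
This isolates the dependence on $r$ and on the $x_j$ in the prefactor $(1+q^{-t})^{-r}$ and reduces everything to the $r = 0$ computation.

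Next I would apply Lemma \ref{count} with $n = 2$: we have $\#\bF_q[x]^2_d = q^d - q^{d-1}$ for $d \geq 2$, while $\#\bF_q[x]^2_0 = 1$ and $\#\bF_q[x]^2_1 = q$. Pretending the formula $q^d - q^{d-1}$ holds for all $d \geq 0$ undercounts by $q^{-1}$ at $d = 0$ (since $1 - (1 - q^{-1}) = q^{-1}$) and by $q^{-t}$ at $d = 1$ (since $q \cdot q^{-t} - (q-1)q^{-t} = q^{-t}$), which gives precisely the first displayed equality after the prefactor is multiplied back in. The remaining equalities are purely algebraic: factoring $q^d - q^{d-1} = (q-1)q^{-1}\cdot q^d$ produces the second line, and summing the geometric series $\sum_{d=0}^{\infty} q^{(1-t)d} = (1-q^{1-t})^{-1}$ in the region $\mathrm{Re}(t) > 1$ where $\zeta$ is defined yields the third.

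The main obstacle: there really is none. Once the Euler product rearrangement is recognized, everything else is bookkeeping. The only point requiring care is matching the two correction terms coming from the low-degree anomalies in Lemma \ref{count} to the $q^{-1}$ and $q^{-t}$ summands on the right, but this is immediate from the two one-line arithmetic identities above.
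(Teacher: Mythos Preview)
Your proof is correct and follows exactly the same approach as the paper: Euler-product factorization over monic irreducibles to extract the prefactor $(1+q^{-t})^{-r}$, then grouping the $r=0$ sum by degree and invoking Lemma~\ref{count}. Your handling of the low-degree correction terms $q^{-1}$ and $q^{-t}$ is in fact more explicit than the paper's, which simply cites Lemma~\ref{count} and declares the result.
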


Notice that by putting $t = N \geq 2$, we have

\begin{align}
\displaystyle\sum_{\substack{g \in \mathbb{F}_{q}[x]_{\infty}^{2} \\ g(x_{1}), \cdots, g(x_{r}) \neq 0}}q^{-N\deg(g)} & = \left(\dfrac{1}{1 + q^{-N}}\right)^{r}\left(q^{-1} + q^{-N} + \dfrac{1-q^{-1}}{1 - q^{1-N}}\right).\label{count2}
\end{align}

\begin{proof} Fix $0 \leq r \leq q$. Then

\begin{align*}
\displaystyle\sum_{\substack{g \in \mathbb{F}_{q}[x]_{\infty}^{2} \\ g(x_{1}), \cdots, g(x_{r}) \neq 0}}q^{-t\deg(g)} & = \prod_{\substack{P \in \mathbb{F}_{q}[x]_{\infty} \\ \text{irreducible} \\ P(x_{1}), \cdots, P(x_{r}) \neq 0}} (1 + q^{-t\deg(P)}) \\
& = \left(\dfrac{1}{1 + q^{-t}}\right)^{r}\prod_{\substack{P \in \mathbb{F}_{q}[x]_{\infty} \\ \text{irreducible}}}(1 + q^{-t\deg(P)}) \\
& = \left(\dfrac{1}{1 + q^{-t}}\right)^{r}\displaystyle\sum_{g \in \mathbb{F}_{q}[x]_{\infty}^{2}}q^{-t\deg(g)} \\
& = \left(\dfrac{1}{1 + q^{-t}}\right)^{r}\sum_{d=0}^{\infty}\displaystyle\sum_{g \in \mathbb{F}_{q}[x]_{d}^{2}}q^{-td} \\
& = \left(\dfrac{1}{1 + q^{-t}}\right)^{r}\sum_{d=0}^{\infty}\#\mathbb{F}_{q}[x]_{d}^{2}q^{-td}
\end{align*}

But we know what $\#\mathbb{F}_{q}[x]_{d}^{2}$ is from Lemma \ref{count}, so we have obtained the result.
\end{proof}

\

\subsection{Proof of Theorem \ref{C}} We prove the first equality of the following:

\begin{align*}
& \#\{F \in \mathcal{F}_{d,4}^{(1,N,N+1)}(\bF_{q}) : F(x_{1}) = a_{1}, \cdots, F(x_{r}) = a_{r}\} \\
& = \left(\dfrac{q^{d-r}(1 - q^{-1})}{(1 - q^{-2})^{r}}\right) \left(\dfrac{1}{1 + q^{-N}}\right)^{r} \left(q^{-1} + q^{-N} + \dfrac{1-q^{-1}}{1 - q^{1-N}}\right)  + O(q^{d/2}) \\
&= q^{d}(1 - q^{-1}) \left(\dfrac{q^{-1}}{1 + q^{-1} + q^{-N} + q^{N+1}}\right)^{r} \left(q^{-1} + q^{-N} + \dfrac{1-q^{-1}}{1 - q^{1-N}}\right)  + O(q^{d/2}),
\end{align*}

\

because the second one is obvious.

\

\hspace{3mm} By Lemma \ref{key}, we have

\[\mathcal{F}_{d,4}^{(1,N,N+1)}(\bF_{q}) = \{F = fg^{2} : f, g \in \mathbb{F}_{q}[x]_{\infty}^{2} \text{ with } d = \deg(f) + N\deg(g)\},\]

so

\begin{align*}
\# & \{F \in \mathcal{F}_{d,4}^{(1,N,N+1)}(\bF_{q}) : F(x_{1}) = a_{1}, \cdots, F(x_{l}) = a_{l}\} \\
& = \displaystyle\sum_{\substack{g \in \mathbb{F}_{q}[x]_{\infty}^{2} \\ 0 \leq \deg(g) \leq d/N \\ g(x_{1}), \cdots, g(x_{l}) \neq 0}}\#\{f \in \mathbb{F}_{q}[x]_{d-N\deg(g)}^{2} : f(x_{i}) = a_{i}/g(x_{i})^{2} \text{ for } 1 \leq i \leq l\}.
\end{align*}

\

By Lemma \ref{nfree} (when $n = 2$), the above is

\begin{align}
& = \displaystyle\sum_{\substack{g \in \mathbb{F}_{q}[x]_{\infty}^{2} \\ 0 \leq \deg(g) \leq d/N \\ g(x_{1}), \cdots, g(x_{r}) \neq 0}}\dfrac{q^{d-N\deg(g)-r}(1 - q^{-1})}{(1 - q^{-2})^{r}} + O(q^{d/2}) \\
& = \displaystyle\sum_{\substack{g \in \mathbb{F}_{q}[x]_{\infty}^{2} \\ g(x_{1}), \cdots, g(x_{r}) \neq 0}}\dfrac{q^{d-N\deg(g)-r}(1 - q^{-1})}{(1 - q^{-2})^{r}} - \displaystyle\sum_{\substack{g \in \mathbb{F}_{q}[x]_{\infty}^{2} \\ d/N < \deg(g) < \infty \\ g(x_{1}), \cdots, g(x_{r}) \neq 0}}\dfrac{q^{d-N\deg(g)-r}(1 - q^{-1})}{(1 - q^{-2})^{r}} + O(q^{d/2}).\label{eqn}
\end{align}

\

But then the second term of the above is negligible because

\begin{align*}
\left|\displaystyle\sum_{\substack{g \in \mathbb{F}_{q}[x]_{\infty}^{2} \\ d/N < \deg(g) < \infty \\ g(x_{1}), \cdots, g(x_{r}) \neq 0}}\dfrac{q^{d-N\deg(g)-r}(1 - q^{-1})}{(1 - q^{-2})^{r}}\right| & = \displaystyle\sum_{\substack{g \in \mathbb{F}_{q}[x]_{\infty}^{2} \\ d/N < \deg(g) < \infty \\ g(x_{1}), \cdots, g(x_{r}) \neq 0}}\dfrac{q^{d-N\deg(g)-r}(1 - q^{-1})}{(1 - q^{-2})^{r}} \\
& \leq \displaystyle\sum_{\substack{g \in \mathbb{F}_{q}[x]_{\infty}^{2} \\ d/N < \deg(g) < \infty}}\dfrac{q^{d-N\deg(g)-r}(1 - q^{-1})}{(1 - q^{-2})^{r}} \\
& = \left(\dfrac{q^{d-r}(1 - q^{-1})}{(1 - q^{-2})^{r}}\right)\displaystyle\sum_{\substack{g \in \mathbb{F}_{q}[x]_{\infty}^{2} \\ d/N < \deg(g) < \infty}}q^{-N\deg(g)}
\end{align*}

\begin{align*}
& = \left(\dfrac{q^{d-r}(1 - q^{-1})}{(1 - q^{-2})^{r}}\right)\displaystyle\sum_{d' > d/N}\#\mathbb{F}_{q}[x]_{d'}^{2}q^{-Nd'} \\
& = \left(\dfrac{q^{d-r}(1 - q^{-1})^{2}}{(1 - q^{-2})^{r}}\right)\displaystyle\sum_{d' > d/N}q^{(1-N)d'} \\
& \leq \left(\dfrac{q^{d-r}(1 - q^{-1})^{2}}{(1 - q^{-2})^{r}}\right)\dfrac{q^{(1-N)d/N}}{1 - q^{1-N}},
\end{align*}

\

where the first equality follows because we take the absolute value of a positive quantity, the last equality holds since $d/N \geq N/N = 1$ (so that $d' \geq 2$ implying $\#\mathbb{F}_{q}[x]_{d}^{2} = q^{d'}(1 - q^{-1})$), and the last inequality holds since $1 - N < 0$. The above is

\[= O(q^{d}q^{(1-N)d/N}) = O(q^{d + d/N - d}) = O(q^{d/N}).\]

\

Therefore, following (\ref{eqn}), we have

\begin{align*}
\# & \{F \in \mathcal{F}_{d,4}^{(1,N,N+1)} : F(x_{1}) = a_{1}, \cdots, F(x_{r}) = a_{r}\} \\
& = \displaystyle\sum_{\substack{g \in \mathbb{F}_{q}[x]_{\infty}^{2} \\ g(x_{1}), \cdots, g(x_{r}) \neq 0}}\dfrac{q^{d-N\deg(g)-r}(1 - q^{-1})}{(1 - q^{-2})^{r}} + O(q^{d/2}) + O(q^{d/N}) \\
& = \left(\dfrac{q^{d-r}(1 - q^{-1})}{(1 - q^{-2})^{r}}\right) \displaystyle\sum_{\substack{g \in \mathbb{F}_{q}[x]_{\infty}^{2} \\ g(x_{1}), \cdots, g(x_{r}) \neq 0}}q^{-N\deg(g)} + O(q^{d/2}) \\
& = \left(\dfrac{q^{d-r}(1 - q^{-1})}{(1 - q^{-2})^{r}}\right) \left(\dfrac{1}{1 + q^{-N}}\right)^{r}\left(q^{-1} + q^{-N} + \dfrac{1-q^{-1}}{1 - q^{1-N}}\right) + O(q^{d/2}),
\end{align*}

\

where the last equality is obtained by applying (\ref{count2}).

\

\section{Further directions}

\hspace{3mm} Currently, the only viable strategy to attack Conjecture \ref{monic} for the case $c_{1} = \cdots = c_{n-1} = 1$ is to generalize techniques in \cite{BDFL10} and \cite{BDFKLOW16}, which involve many intricate computations. One can see how the method for proving Theorem \ref{C2} fails for the above case by taking $N = 1$ in the last part of the statement of Lemma \ref{zeta}. However, we believe that there are still more cases of Conjecture \ref{monic} that can be resolved only using the techniques introduced in Section 2.

\

\hspace{3mm} The key argument in the proof of Theorem \ref{C2} hinged on the recognition that

\[\{f_{1}f_{2}^{2}f_{3}^{3} : f_{i} \text{ monic square-free and pairwise coprime with } N_{1}f_{1} + N_{2}f_{2} + (N_{1} + N_{2})f_{3} = d\}\]

\

and

\[\{\tilde{f}_{1}\tilde{f}_{2}^{2} : \tilde{f}_{i} \text{ monic square-free with } N_{1}\tilde{f}_{1} + N_{2}\tilde{f}_{2} = d\},\]

\

are equal where $\tilde{f}_{1} = f_{1}f_{3}$ and $\tilde{f}_{2} = f_{2}f_{3}$. This combinatorial observation can be vastly generalized to the case where $n$ is any power of $2$ in place of $n = 4$. For example, consider the case $n = 2^{3} = 8$. What is the set 

\[\{\tilde{f}_{1}^{e_{1}}\tilde{f}_{2}^{e_{2}}\tilde{f}_{3}^{e_{3}} : \tilde{f}_{i} \text{ monic square-free with } N_{1}\tilde{f}_{1} + N_{2}\tilde{f}_{2} + N_{3}\tilde{f}_{3} = d\}\]

\

actually counting? By writing $\tilde{f}_{i} = f_{i}f_{ij}f_{ijk}$ (where indices are commutative), we have

\begin{align*}
\tilde{f}_{1}^{e_{1}}\tilde{f}_{2}^{e_{2}}\tilde{f}_{3}^{e_{3}} = f_{1}^{e_{1}}f_{2}^{e_{2}}f_{3}^{e_{3}}f_{12}^{e_{1} + e_{2}}f_{13}^{e_{1} + e_{3}}f_{23}^{e_{2} + e_{3}}f_{123}^{e_{1} + e_{2} + e_{3}}.
\end{align*}

\

Thus, the counting techniques in this paper can be applied by recognizing that counting $(\tilde{f}_{1}, \tilde{f}_{2}, \tilde{f}_{3})$ without coprime condition is much easier than counting $(f_{1}, f_{2}, f_{3}, f_{12}, f_{13}, f_{23}, f_{123})$ with coprime condition. Our hope is to observe an even more general phenomenon that may suggest how changing weights affects the limiting behavior of the distribution.

\

\end{document}